\documentclass[12pt,reqno]{amsart}
\usepackage{amsmath, amsfonts, amssymb, amsthm, hyperref}
\usepackage{bm}
\allowdisplaybreaks[4]
\textwidth=480pt \evensidemargin=0pt \oddsidemargin=0pt
\def\l{\left}
\def\r{\right}
\def\bg{\bigg}
\def\({\bg(}
\def\){\bg)}
\def\t{\text}
\def\f{\frac}

\def\eq{\equiv}

\def\1{{\bf 1}}

\theoremstyle{plain}
\newtheorem{theorem}{Theorem}[section]
\newtheorem{lemma}{Lemma}

\theoremstyle{definition}

\theoremstyle{remark}
\newtheorem{remark}{Remark}

\def\<{\langle}
\def\>{\rangle}

\begin{document}
\hbox{}
\medskip

\title[Symbolic summation methods and hypergeometric supercongruences]{Symbolic summation methods and hypergeometric supercongruences}

\author{Chen Wang}
\address {(Chen Wang) Department of Mathematics, Nanjing
University, Nanjing 210093, People's Republic of China}
\email{cwang@smail.nju.edu.cn}

\subjclass[2010]{Primary 33C20, 11A07; Secondary 11B65, 05A10}
\keywords{congruences, WZ-pair, Euler numbers, Bernoulli polynomials}
\thanks{This work is supported by the National Natural Science Foundation of China (grant no. 11971222)}
\begin{abstract} In this paper, we establish the following two congruences:
\begin{gather*}
\sum_{k=0}^{(p+1)/2}(3k-1)\frac{\left(-\frac{1}{2}\right)_k^2\left(\frac{1}{2}\right)_k4^k}{k!^3}\equiv p-6p^3\left(\frac{-1}{p}\right)+2p^3\left(\frac{-1}{p}\right)E_{p-3}\pmod{p^4},\\
\sum_{k=0}^{p-1}(3k-1)\frac{\left(-\frac{1}{2}\right)_k^2\left(\frac{1}{2}\right)_k4^k}{k!^3}\equiv p-2p^3\pmod{p^4},
\end{gather*}
where $p>3$ is a prime, $E_{p-3}$ is the $(p-3)$-th Euler number and $\left(-\right)$ is the Legendre symbol. The first congruence modulo $p^3$ was conjectured by Guo and Schlosser recently.
\end{abstract}
\maketitle

\section{Introduction}
\setcounter{lemma}{0}
\setcounter{theorem}{0}
\setcounter{equation}{0}
\setcounter{conjecture}{0}
\setcounter{remark}{0}

In 1997, Van Hamme \cite{vH} conjectured that for any odd prime $p$ one have
\begin{equation}\label{vhcon}
\sum_{k=0}^{(p-1)/2}(-1)^k(4k+1)\f{\l(\f12\r)_k^3}{k!^3}\eq p\l(\f{-1}{p}\r)\pmod{p^3},
\end{equation}
where $(x)_k=x(x+1)\cdots(x+k-1)$ is the Pochhammer symbol and $\l(-\r)$ is the Legendre symbol. This is a $p$-adic analogue of the following Ramanujan-type formula for $1/\pi$ due to Bauer \cite{Ba}:
$$
\sum_{k=0}^{\infty}(-1)^k(4k+1)\f{\l(\f12\r)_k^3}{k!^3}=\f{2}{\pi}.
$$
\eqref{vhcon} was later confirmed by Mortenson \cite{Mo} in 2008. In 2018, Guo \cite{Guo18} gave a $q$-analogue of \eqref{vhcon} as follows:
$$
\sum_{k=0}^{(p-1)/2}(-1)^kq^{k^2}[4k+1]\f{(q;q^2)_k^3}{(q^2;q^2)_k^3}\eq [p]q^{(p-1)^2/4}(-1)^{(p-1)/2}\pmod{[p]^3},
$$
where $(x;q)_n=(1-x)(1-xq)\cdots(1-xq^{n-1})$ denotes the $q$-Pochhammer symbol and $[n]=1+q+\cdots+q^{n-1}$ denotes the $q$-integer. In 2012, Sun obtained the following refinement of \eqref{vhcon} modulo $p^4$:
\begin{equation}\label{sunrefine}
\sum_{k=0}^{(p-1)/2}(-1)^k(4k+1)\f{\l(\f12\r)_k^3}{k!^3}\eq p\l(\f{-1}{p}\r)+p^3E_{p-3}\pmod{p^4},
\end{equation}
where $E_n$ is the $n$-th Euler number defined by
$$
\sum_{n=0}^{\infty}E_n\f{x^n}{n!}=\f{2e^x}{e^{2x}+1}\quad\t{for}\quad|x|<\f{\pi}{2}.
$$

Recently, Guo, Schlosser and Zudilin studied the $q$-congruences concerning basic hypergeometric series systematically. For example, in 2019, Guo and Zudilin \cite{GuoZu} developed the so-called $q$-microscope method to prove a series of basic hypergeometric congruences. For more details about their work one may consult \cite{GuoSch19a,GuoSch19b,GuoSch19c,GuoSch19d,GuoZu}.

In 2011, Sun \cite[Conj. 5.1(ii)]{Sun11} conjectured that for any prime $p>3$ one have
\begin{equation}\label{sun11conj}
\sum_{k=0}^{(p-1)/2}(3k+1)\f{\l(\f12\r)_k^34^k}{k!^3}\eq p+2p^3\l(\f{-1}{p}\r)E_{p-3}\pmod{p^4}.
\end{equation}
This was confirmed by Mao and Zhang in \cite{MZ}. In 2019, Guo and Schlosser \cite[(6.5)]{GuoSch19c} obtained a $q$-analogue of \eqref{sun11conj} modulo $p^3$. Analogously, they proposed the following conjectural congruence:
\begin{equation}\label{GuoSchconj}
\sum_{k=0}^{(p+1)/2}(3k-1)\frac{\left(-\frac{1}{2}\right)_k^2\left(\frac{1}{2}\right)_k4^k}{k!^3}\equiv p\pmod{p^3},
\end{equation}
where $p$ is an odd prime.

The main goal of this paper is to show \eqref{GuoSchconj} by establishing the following generalization.
\begin{theorem}\label{mainth1}
For any prime $p>3$, we have
\begin{equation}\label{mainth1eq}
\sum_{k=0}^{(p+1)/2}(3k-1)\frac{\left(-\frac{1}{2}\right)_k^2\left(\frac{1}{2}\right)_k4^k}{k!^3}\equiv p-6p^3\left(\frac{-1}{p}\right)+2p^3\left(\frac{-1}{p}\right)E_{p-3}\pmod{p^4}.
\end{equation}
\end{theorem}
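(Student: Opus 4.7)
My plan is to prove Theorem~\ref{mainth1} via the WZ (Wilf--Zeilberger) method, using an explicit telescoping identity for the summand. The first step is to construct a WZ-pair $(F(n,k),G(n,k))$ satisfying
\begin{equation*}
F(n+1,k)-F(n,k)=G(n,k+1)-G(n,k),
\end{equation*}
designed so that a specialization of $F$ recovers (up to a scalar) the summand $(3k-1)(-1/2)_k^2(1/2)_k 4^k/k!^3$. A natural ansatz, suggested by Zeilberger's algorithm applied to this summand, is to introduce an auxiliary $n$-parameter through factors such as $(-n)_k(n+1)_k$ that both agree with the original summand at $n=0$ and terminate the sum at another value of $n$; the companion $G$ is then determined uniquely up to a $k$-summable term, and the WZ identity itself is verified by direct polynomial comparison.

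Next, I would telescope the WZ identity by summing over $n\in\{0,1,\dots,(p-1)/2\}$. This converts the sum in \eqref{mainth1eq} into a combination of (i) a simpler sum $\sum_k F(N,k)$ at the opposite endpoint $N=(p+1)/2$, chosen so that the additional Pochhammer factors cause either vanishing or a drastic simplification modulo $p^4$, and (ii) boundary contributions $\sum_n\bigl(G(n,N+1)-G(n,0)\bigr)$, which are single sums of rational functions of $n$ whose $p$-adic behavior is much easier to analyze than the original triple-Pochhammer sum.

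The third and most technical step is the $p$-adic analysis of these boundary sums modulo $p^4$. Here I would use Morley-type congruences for central binomial coefficients, $p$-adic expansions of $(\pm 1/2)_k$ in a neighborhood of $k=(p-1)/2$, and classical evaluations such as
\begin{equation*}
\sum_{k=0}^{(p-1)/2}\frac{1}{(2k+1)^2}\equiv (-1)^{(p-1)/2}E_{p-3}\pmod{p}
\end{equation*}
together with its $\pmod{p^2}$ refinement, to translate each piece into an explicit expression involving $p$, $\l(\f{-1}{p}\r)$, harmonic-type sums, Bernoulli numbers, and the Euler number $E_{p-3}$. The Legendre symbol should enter naturally through $\binom{-1/2}{(p-1)/2}\equiv (-1)^{(p-1)/2}\pmod p$ that controls the midpoint behavior of the half-integer Pochhammers.

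The chief obstacle, I expect, is the $p^4$-precision. A routine WZ reduction typically yields only the weaker $\pmod{p^3}$ congruence \eqref{GuoSchconj} of Guo--Schlosser; to push one order further one must expand every Pochhammer factor one step deeper in $p$, track cross-terms arising from logarithmic derivatives of $(1/2)_k$ and $(-1/2)_k$ at half-integer points, and then verify an exact cancellation producing the refined error term $-6p^3\l(\f{-1}{p}\r)+2p^3\l(\f{-1}{p}\r)E_{p-3}$. This last piece of $p$-adic bookkeeping is precisely what turns the known $\pmod{p^3}$ conjecture into the stated $\pmod{p^4}$ theorem.
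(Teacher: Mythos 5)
Your outline correctly identifies the paper's overall strategy --- a WZ pair one of whose specializations recovers the summand $(3n-1)(-\tfrac12)_n^2(\tfrac12)_n4^n/n!^3$, telescoping to push the sum to the boundary, and $p$-adic analysis of the boundary terms via Morley's congruence --- but as written it is a road map rather than a proof, and the missing parts are exactly the ones that carry the mathematical content. First, no WZ pair is actually produced. The ansatz you suggest, inserting factors $(-n)_k(n+1)_k$, is not the deformation that works here: the pair used in the paper deforms the summand by shifting $-\tfrac12$ to $-\tfrac12-k$ in two of the three Pochhammer factors, namely
$$
F(n,k)=(6n^2-5n+1-4nk+2k)\,\frac{\left(-\tfrac12-k\right)_n^2\left(-\tfrac12\right)_n4^n}{n!^3},\qquad G(n,k)=\frac{4(1-2n)n^3}{(3+2k-2n)^2}\cdot\frac{\left(-\tfrac12-k\right)_n^2\left(-\tfrac12\right)_n4^n}{n!^3},
$$
and finding a companion $G$ in closed form is a genuine discovery, not something ``determined uniquely up to a summable term'' from an arbitrary ansatz. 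Without an explicit pair there is nothing to telescope, and the decomposition into the three pieces $F(\tfrac{p+1}{2},0)$, $\sum_{k}G(\tfrac{p+1}{2},k)$ and $\sum_{n}F(n,\tfrac{p-1}{2})$ cannot even be written down.

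Second, the $p$-adic analysis is asserted rather than performed, and the one concrete ingredient you do state is wrong: your sum $\sum_{k=0}^{(p-1)/2}1/(2k+1)^2$ literally includes the term $1/p^2$, and with that term removed it equals $H_{p-1}^{(2)}-\tfrac14H_{(p-1)/2}^{(2)}\equiv0\pmod p$, not $(-1)^{(p-1)/2}E_{p-3}$. The Euler number actually enters through $\sum_{k=1}^{(p-1)/2}(-1)^k/k^2\equiv2(-1)^{(p-1)/2}E_{p-3}\pmod p$ and through $\sum_{k=0}^{(p-1)/2}\binom{2k}{k}^216^{-k}\equiv(-1)^{(p-1)/2}+p^2E_{p-3}\pmod{p^3}$. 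Moreover the boundary sums are not ``single sums of rational functions of $n$'': after expanding $(1-\tfrac p2)_k$ to order $p^2$ one needs closed forms for $\sum_k\frac{(1/2)_k^2}{(1)_k^2}H_k$, $\sum_k\frac{(1/2)_k^2}{(1)_k^2}H_k^2$ and $\sum_k\frac{(1/2)_k^2}{(1)_k^2}H_k^{(2)}$ (obtained in the paper by computer algebra from $(\tfrac12)_k^2\equiv(\tfrac{1-p}2)_k(\tfrac{1+p}2)_k$), together with known congruences for $\sum\binom{2k}{k}/k$, $\sum\binom{2k}{k}/k^2$ and $\sum\binom{2k}{k}H_k/k$, in which the contributions involving $B_{p-2}(\tfrac13)$ must be seen to cancel. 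None of this bookkeeping --- which you yourself flag as the chief obstacle --- is carried out, so the proposal does not establish the congruence modulo $p^4$, nor even modulo $p^3$.
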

\begin{remark}
One may check \eqref{GuoSchconj} for $p=3$ directly. Thus \eqref{mainth1eq} is actually an extension of \eqref{GuoSchconj}.
\end{remark}

Note that for $k\geq(p+3)/2$ we have
$$
\left(-\frac{1}{2}\right)_k^2\left(\frac{1}{2}\right)_k\eq0\pmod{p^3}.
$$
Thus we have
$$
\sum_{k=0}^{p-1}(3k-1)\frac{\left(-\frac{1}{2}\right)_k^2\left(\frac{1}{2}\right)_k4^k}{k!^3}\eq\sum_{k=0}^{(p+1)/2}(3k-1)\frac{\left(-\frac{1}{2}\right)_k^2\left(\frac{1}{2}\right)_k4^k}{k!^3}\pmod{p^3}.
$$
However, the above congruence does not hold modulo $p^4$. Below we state our second result.
\begin{theorem}\label{mainth2}
For any prime $p>3$, we have
\begin{equation}\label{mainth2eq}
\sum_{k=0}^{p-1}(3k-1)\frac{\left(-\frac{1}{2}\right)_k^2\left(\frac{1}{2}\right)_k4^k}{k!^3}\equiv p-2p^3\pmod{p^4}.
\end{equation}
\end{theorem}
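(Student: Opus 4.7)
The strategy is to deduce Theorem~\ref{mainth2} from Theorem~\ref{mainth1} by evaluating, modulo $p^4$, the tail
\[
T := \sum_{k=(p+3)/2}^{p-1}(3k-1)\frac{\left(-\frac{1}{2}\right)_k^2\left(\frac{1}{2}\right)_k 4^k}{k!^3}.
\]
By the observation preceding Theorem~\ref{mainth2}, each summand of $T$ is already divisible by $p^3$, so it suffices to pin down each term's $p$-adic cofactor modulo $p$.

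To make the cofactor accessible, I first rewrite the summand using $\left(-\frac{1}{2}\right)_k = -\frac{1}{2k-1}\left(\frac{1}{2}\right)_k$ (valid for $k\geq 1$) together with $\left(\frac{1}{2}\right)_k = (2k)!/(4^k k!)$, so that the $k$-th term becomes $(3k-1)\binom{2k}{k}^3/((2k-1)^2 \cdot 16^k)$. For $k$ in the tail range, I substitute $k = p - k'$ with $1\leq k'\leq (p-3)/2$. Wilson's theorem, together with the classical identity $((p-1)/2)!^{\,2}\equiv -\left(\frac{-1}{p}\right)\pmod p$, then gives $\binom{2k}{k}/p \equiv -((k'-1)!)^2/(2k'-1)!\pmod p$. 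After simplification the tail becomes
\[
T \equiv \frac{p^3}{2}\sum_{k'=1}^{(p-3)/2}\frac{(3k'+1)\cdot 16^{k'}}{k'^3(2k'+1)^2\binom{2k'}{k'}^3}\pmod{p^4},
\]
and subtracting Theorem~\ref{mainth1} from the target $p - 2p^3$ reduces Theorem~\ref{mainth2} to the finite-sum congruence
\[
\sum_{k'=1}^{(p-3)/2}\frac{(3k'+1)\cdot 16^{k'}}{k'^3(2k'+1)^2\binom{2k'}{k'}^3}\equiv -4 + 12\left(\frac{-1}{p}\right) - 4\left(\frac{-1}{p}\right)E_{p-3}\pmod p.
\]

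The main obstacle is this last congruence, whose summands involve $1/\binom{2k'}{k'}^{3}$. I plan to use the decomposition $3k'+1 = (2k'+1) + k'$ to split the sum into two more manageable pieces, and then to evaluate each modulo $p$ by combining classical $p$-adic identities---such as $\sum_{k=1}^{(p-1)/2}k^{-2}\equiv -2E_{p-3}\pmod p$, related harmonic-sum evaluations, and Wilson-type formulas---with known identities for sums involving $1/\binom{2k'}{k'}$; these are precisely the sources of $E_{p-3}$ and the Legendre symbol on the right-hand side. An attractive alternative, should the term-by-term route prove overly technical, would be to construct a WZ pair for the full sum $\sum_{k=0}^{p-1}(3k-1)\left(-\frac{1}{2}\right)_k^2\left(\frac{1}{2}\right)_k 4^k/k!^3$ directly, paralleling the WZ certificate used for Theorem~\ref{mainth1}, and read off the claimed residue modulo $p^4$ from its boundary term.
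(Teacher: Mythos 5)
Your overall reduction is plausible---and the term-by-term analysis of the tail (each summand divisible by $p^3$, cofactor computed via $k\mapsto p-k'$ and Wilson-type congruences) is a legitimate route that numerically checks out for small primes---but the argument as written has a genuine gap: everything is funneled into the congruence
\begin{equation*}
\sum_{k'=1}^{(p-3)/2}\frac{(3k'+1)\cdot 16^{k'}}{k'^3(2k'+1)^2\binom{2k'}{k'}^3}\equiv -4+12\left(\frac{-1}{p}\right)-4\left(\frac{-1}{p}\right)E_{p-3}\pmod p,
\end{equation*}
which you do not prove. This is not a routine consequence of ``classical $p$-adic identities'': the known evaluations you would want to lean on (and the ones the paper itself uses in Lemma \ref{2kk}) concern sums with $\binom{2k}{k}$ in the \emph{numerator}, whereas your sum has $\binom{2k'}{k'}^3$ in the denominator together with $16^{k'}$, and the splitting $3k'+1=(2k'+1)+k'$ does not visibly connect either piece to a documented congruence. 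As it stands you have replaced the theorem by another supercongruence of comparable difficulty, so the proof is incomplete.

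The paper avoids this entirely by not passing through Theorem \ref{mainth1} at all: it applies the \emph{same} WZ pair $(F,G)$ of Lemma \ref{wzpair} to the full range $0\le n\le p-1$, obtaining
\begin{equation*}
\sum_{n=0}^{p-1}(3n-1)\frac{\left(-\frac12\right)_n^2\left(\frac12\right)_n4^n}{n!^3}=\sum_{k=0}^{(p-3)/2}G(p,k)-\sum_{n=0}^{p-1}F\left(n,\tfrac{p-1}{2}\right),
\end{equation*}
and then evaluates the boundary sum $\sum_k G(p,k)\equiv 2p-2p^3$ via $\left(-\frac12\right)_p$ and harmonic-number congruences, and $\sum_n F\left(n,\frac{p-1}{2}\right)\equiv p$ via the full-range central binomial sums in Lemma \ref{2kk}. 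This is exactly the ``attractive alternative'' you mention in your last sentence---no new WZ pair is needed, only a re-telescoping of the existing one---so I would recommend developing that route rather than attacking the inverse-central-binomial sum above.
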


We shall prove these two theorem by WZ method and Mathematica Package \verb"Sigma". One may refer to \cite{PWZ} and \cite{S} for the usage of these tools respectively.
\medskip
\section{Proofs of Theorems \ref{mainth1} and \ref{mainth2}}
\setcounter{lemma}{0}
\setcounter{theorem}{0}
\setcounter{equation}{0}
\setcounter{conjecture}{0}
\setcounter{remark}{0}

Our proofs are based on the following Wilf-Zeilberger pair (WZ pair) which can be verified directly.
\begin{lemma}\label{wzpair}
Set
$$
F(n,k)=(6n^2-5n+1-4nk+2k)\f{\l(-\f12-k\r)_n^2\l(-\f12\r)_n4^n}{n!^3}
$$
and
$$
G(n,k)=\f{4(-2n+1)n^3}{(3+2k-2n)^2}\cdot\f{\l(-\f12-k\r)_n^2\l(-\f12\r)_n4^n}{n!^3}.
$$
Then we have
$$
F(n,k+1)-F(n,k)=G(n+1,k)-G(n,k)
$$
for any nonnegative integers $n$ and $k$.
\end{lemma}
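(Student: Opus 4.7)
The plan is a direct algebraic verification. Introduce the common base factor
$$R(n,k) := \f{\l(-\f12-k\r)_n^2 \l(-\f12\r)_n 4^n}{n!^3},$$
so that $F(n,k) = P(n,k)\,R(n,k)$ with $P(n,k) := 6n^2-5n+1-4nk+2k$, and $G(n,k) = Q(n,k)\,R(n,k)$ with $Q(n,k) := 4(1-2n)n^3/(3+2k-2n)^2$. Every term in the claimed identity is $R(n,k)$ times an explicit rational function of $n$ and $k$, so once $R(n,k)$ is pulled out the lemma reduces to a rational identity in $n$ and $k$.

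First I would record the two Pochhammer shift ratios
$$\f{R(n,k+1)}{R(n,k)} = \f{(2k+3)^2}{(2k+3-2n)^2}, \qquad \f{R(n+1,k)}{R(n,k)} = \f{(2n-1)(2n-1-2k)^2}{2(n+1)^3},$$
both obtained by telescoping in the Pochhammer products. Substituting these into $F(n,k+1) - F(n,k) = G(n+1,k) - G(n,k)$, dividing through by $R(n,k)$, and clearing the common denominator $(2k+3-2n)^2$ converts the claim into a polynomial identity in $n$ and $k$.

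The useful simplification at this stage is the factorization
$$P(n,k) = 6n^2 - 5n + 1 - 4nk + 2k = (2n-1)(3n-1-2k),$$
which lets me pull a factor of $2(2n-1)$ out of both sides. Writing $a := 2k+3$, the remaining identity collapses to
$$(3n-a)a^2 + (n+a)(a-2n)^2 = 4n^3,$$
a cubic polynomial identity in $n$ and $a$ that is verified by a one-line expansion.

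There is no conceptual obstacle; the whole lemma is a mechanical computation, and as the statement itself acknowledges, it ``can be verified directly''. The only craft is in choosing good shorthands, in particular using the factorization of $P(n,k)$, so that the intermediate expressions stay short enough to do by hand rather than via computer algebra.
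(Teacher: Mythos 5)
Your verification is correct: both Pochhammer shift ratios are right, and the reduction to the cubic identity $(3n-a)a^2+(n+a)(a-2n)^2=4n^3$ (after cancelling the nonvanishing factor $R(n,k)$ and the common factor $2n-1$) checks out by direct expansion. The paper offers no proof beyond the remark that the pair ``can be verified directly,'' so your proposal simply carries out the intended routine computation.
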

\begin{remark}
Finding a new WZ pair is not easy. We have never seen the above WZ pair in any former literature.
\end{remark}

\begin{lemma}\label{sigmaid}
Let $n$ be a positive integer. Then we have
\begin{gather*}
\sum_{k=1}^n\f{(-n)_k(1+n)_k}{(1)_k^2}H_k^{(2)}=-2(-1)^n\sum_{k=1}^{n}\f{(-1)^n}{k^2},\\
\sum_{k=1}^n\f{(-n)_k(1+n)_k}{(1)_k^2}H_k^2=4(-1)^nH_n^2+2(-1)^n\sum_{k=1}^{n}\f{(-1)^n}{k^2},\\
\sum_{k=1}^n\f{(-n)_k(1+n)_k}{(1)_k^2}H_k=2(-1)^nH_n,
\end{gather*}
where $H_n^{m}=\sum_{k=1}^n1/k^m$ is the $n$-th harmonic number of order $m$.
\end{lemma}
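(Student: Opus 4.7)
The plan is to verify each of the three identities in Lemma~\ref{sigmaid} by creative telescoping. Each left-hand side is of the form $\sum_k T(n,k)W(k)$, where $T(n,k)=(-n)_k(n+1)_k/(1)_k^2=(-1)^k\bi{n}{k}\bi{n+k}{k}$ is hypergeometric in $k$ and $W(k)\in\{H_k,H_k^2,H_k^{(2)}\}$. The \texttt{Sigma} package (to whose use the label \textit{sigmaid} alludes) produces for each such sum a first-order linear recurrence in $n$; checking that the proposed closed form on the right satisfies the same recurrence and that both sides agree at $n=1$ then closes a straightforward induction.

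To motivate where two of the three identities come from in closed form, I would begin with the Chu--Vandermonde evaluation
\[
\sum_{k=0}^{n}\f{(-n)_k(n+1)_k}{(1+\gamma)_k\,k!}=(-1)^n\f{(1-\gamma)_n}{(1+\gamma)_n},
\]
and expand both sides in powers of $\gamma$ at $\gamma=0$. Using
\[
\f{1}{(1+\gamma)_k}=\f{1}{k!}\(1-\gamma H_k+\f{\gamma^2}{2}(H_k^2+H_k^{(2)})+O(\gamma^3)\)
\]
on the left, together with the expansion $(1-\gamma)_n/(1+\gamma)_n=\prod_{j=1}^n(j-\gamma)/(j+\gamma)=1-2\gamma H_n+2\gamma^2H_n^2+O(\gamma^3)$ (whose logarithm contains only odd powers of $\gamma$) on the right, and matching the coefficients of $\gamma$ and $\gamma^2$, I obtain the third stated identity $\sum H_k=2(-1)^nH_n$ together with
\[
\sum_{k=1}^n\f{(-n)_k(n+1)_k}{(1)_k^2}(H_k^2+H_k^{(2)})=4(-1)^nH_n^2,
\]
which is precisely the sum of the first two identities of the lemma.

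The main obstacle is the separation of $H_k^2$ from $H_k^{(2)}$: the alternating partial sum $\sum_{k=1}^n(-1)^{k-1}/k^2$ appearing on the right of Identities~1 and~2 is not produced by any derivative of a balanced ${}_2F_1$ at unit argument, so a genuinely new input is needed. My plan is to supply it by running Zeilberger's algorithm directly on $A_n:=\sum_{k=1}^n(-1)^k\bi{n}{k}\bi{n+k}{k}H_k^{(2)}$; the resulting telescoping certificate is equivalent to the clean first-order recurrence $A_n+A_{n-1}=-2/n^2$, which is also obviously satisfied by $2(-1)^n\sum_{k=1}^n(-1)^{k-1}/k^2$ with matching value at $n=1$. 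This yields Identity~1, and Identity~2 then follows at once by subtracting from the combined $(H_k^2+H_k^{(2)})$-relation above. A direct sanity check at $n=1,2,3$ corroborates the recurrences before the induction is invoked, so the creative-telescoping step is the sole technical heart of the lemma.
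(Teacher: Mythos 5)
Your proposal is correct, and it takes a genuinely different route from the paper for two of the three identities. The paper's proof is a pure \texttt{Sigma} workflow applied uniformly: \texttt{GenerateRecurrence} returns a second-order homogeneous recurrence (e.g.\ $(1+n)S_n+(3+2n)S_{n+1}+(n+2)S_{n+2}=0$ for the $H_k$-sum), \texttt{SolveRecurrence} produces the solution basis $\{(-1)^n,(-1)^nH_n\}$, and \texttt{FindLinearCombination} matches initial values; only the third identity is worked out as an example. You instead obtain the third identity and the sum of the first two by expanding the Chu--Vandermonde evaluation $\sum_{k=0}^{n}\frac{(-n)_k(n+1)_k}{(1+\gamma)_k\,k!}=(-1)^n\frac{(1-\gamma)_n}{(1+\gamma)_n}$ to second order in $\gamma$ (your expansions of $k!/(1+\gamma)_k$ and of $\prod_{j=1}^n\frac{j-\gamma}{j+\gamma}$ are both right), which is elementary and fully human-checkable; the only remaining computer-algebra input is one telescoping certificate for $A_n=\sum_{k=1}^n(-1)^k\binom{n}{k}\binom{n+k}{k}H_k^{(2)}$, and the recurrence $A_n+A_{n-1}=-2/n^2$ you assert is indeed satisfied (I checked $n\le 4$ directly), with the closed form following by induction. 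This buys a proof in which the alternating Euler sum $\sum_{k=1}^n(-1)^{k-1}/k^2$ is seen to be the single genuinely non-${}_2F_1$ ingredient, at the cost of uniformity. Two small caveats: (i) the statement as printed has a typo --- the numerators $(-1)^n$ inside the right-hand sums should be $(-1)^k$, as the paper's later application confirms and as your reading assumes; (ii) $H_k^{(2)}$ is not hypergeometric in $k$, so ``running Zeilberger's algorithm directly'' is a slight abuse of terminology --- you need \texttt{Sigma}'s difference-field telescoping, or the manual splitting $H_k^{(2)}=H_{k-1}^{(2)}+1/k^2$ combined with the WZ certificate of the underlying hypergeometric sum, to legitimize that step.
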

\begin{proof}
These identities were found by \verb"Sigma". Here we just prove the third one as an example. Set
$$
S_n :=\sum_{k=1}^n\f{(-n)_k(1+n)_k}{(1)_k^2}H_k.
$$
\par {\bf Step 1:} Load \verb"Sigma" in Mathematica and input $S_n$;
\par {\bf Step 2:} Use the command \verb"GenerateRecurrence" to find that $S_n$ satisfies
$$
(1+n)S_n+(3+2n)S_{n+1}+(n+2)S_{n+2}=0;
$$
\par {\bf Step 3:} Use the command \verb"SolveRecurrence" to solve the above recurrence relation and obtain a particular solution $\{0\}$
and the basic system of solutions $\{-(-1)^n, (-1)^nH_n\}$ of the homogeneous version;
\par {\bf Step 4:} Use the command \verb"FindLinearCombination" to get another form of $S_n$ as follows
$$
S_n=2(-1)^nH_n.
$$
\end{proof}

\begin{remark}
The identities in Lemma \ref{sigmaid} may appeared in \cite{G}.
\end{remark}

\begin{lemma}\cite{Sunzh1,Sunzh2}\label{sunzhlemma} For any prime $p>3$ we have
\begin{gather*}
H_{p-1}\eq0\pmod{p^2},\quad H_{(p-1)/2}\eq-2q_p(2)+pq_p^2(2)\pmod{p^2},\\
H_{p-1}^{(2)}\eq H_{(p-1)/2}^{(2)}\eq0\pmod{p},\quad H_{\lfloor p/4\rfloor}\eq4(-1)^{(p-1)/2}E_{p-3}\pmod{p},
\end{gather*}
where $q_p(2)=(2^{p-1}-1)/p$ denotes the Fermat quotient.
\end{lemma}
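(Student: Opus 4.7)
The four congruences collected in this lemma are classical results in the arithmetic of harmonic sums, already available in Sun's papers \cite{Sunzh1,Sunzh2}, so the most efficient route is simply to cite them. If one wanted to reconstruct the proofs from scratch, the plan would be to first settle the mod $p$ statements and then bootstrap to the mod $p^2$ refinements.

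For $H_{p-1}^{(2)}\pmod p$, I would use the fact that inversion permutes $\{1,2,\ldots,p-1\}$ modulo $p$, giving
\begin{equation*}
H_{p-1}^{(2)}\equiv\sum_{k=1}^{p-1}k^2=\frac{(p-1)p(2p-1)}{6}\equiv0\pmod{p}
\end{equation*}
for $p>3$. The substitution $k\leftrightarrow p-k$ then shows $H_{p-1}^{(2)}\equiv 2H_{(p-1)/2}^{(2)}\pmod p$, whence $H_{(p-1)/2}^{(2)}\equiv0\pmod p$. The same pairing yields Wolstenholme's congruence:
\begin{equation*}
H_{p-1}=\sum_{k=1}^{(p-1)/2}\left(\frac{1}{k}+\frac{1}{p-k}\right)=p\sum_{k=1}^{(p-1)/2}\frac{1}{k(p-k)}\equiv-p\,H_{(p-1)/2}^{(2)}\equiv0\pmod{p^2}.
\end{equation*}

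For the Glaisher--Lehmer refinement of $H_{(p-1)/2}$, I would expand $\binom{p-1}{k}$ modulo $p^3$ in terms of $H_k$ and $H_k^{(2)}$, equivalently expanding the product $(1-p)(1-p/2)\cdots(1-p/k)$. Combining this expansion with the identity $2^{p-1}=\sum_{k=0}^{p-1}\binom{p-1}{k}$ (or with an analogous identity for $\binom{(p-1)/2}{k}$ using $\binom{2m}{m}$ with $m=(p-1)/2$) and tracking terms up to order $p^2$ produces $H_{(p-1)/2}$ on one side and $2^{p-1}-1=p\,q_p(2)$ on the other, yielding the stated formula.

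The main obstacle, and the step I expect to be most delicate, is the final congruence $H_{\lfloor p/4\rfloor}\equiv4(-1)^{(p-1)/2}E_{p-3}\pmod p$. This links a truncated harmonic sum at a \emph{quarter}-range to a single Euler number, and is not a formal manipulation: it requires a $p$-adic expression for $E_{p-3}$ via Kummer-type congruences (or the generating function $2/(e^x+e^{-x})$) together with a careful bookkeeping of residue classes in the range $1\le k<p/4$ under $k\leftrightarrow p-k$ and $k\leftrightarrow p/2-k$. This is precisely where I would lean on Sun's arguments in \cite{Sunzh1,Sunzh2} rather than attempt to reproduce them.
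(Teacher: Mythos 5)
The paper gives no proof of this lemma at all --- it is quoted directly from \cite{Sunzh1,Sunzh2} --- so your decision to cite is exactly what the author does, and your elementary derivations of $H_{p-1}^{(2)}\equiv H_{(p-1)/2}^{(2)}\equiv0\pmod p$ and of Wolstenholme's $H_{p-1}\equiv0\pmod{p^2}$ via the pairing $k\leftrightarrow p-k$ are correct. One caveat on the step you single out as delicate: judging from how the congruence is actually used in the proof of Lemma \ref{-1k^2}, the fourth statement is really about the second-order sum $\sum_{k=1}^{\lfloor p/4\rfloor}1/k^2\equiv4(-1)^{(p-1)/2}E_{p-3}\pmod p$ (the superscript $(2)$ has evidently been dropped in the printed lemma); the first-order quarter sum instead satisfies Glaisher's congruence $H_{\lfloor p/4\rfloor}\equiv-3q_p(2)\pmod p$ and involves no Euler number, so be sure you are targeting the order-two identity before leaning on Sun's arguments.
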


\begin{lemma}\label{-1k^2} For any prime $p>3$, we have
$$
\sum_{k=1}^{(p-1)/2}\f{(-1)^k}{k^2}\eq2(-1)^{(p-1)/2}E_{p-3}\pmod{p}.
$$
\end{lemma}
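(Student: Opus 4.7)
My plan is to reduce the alternating sum to an ordinary harmonic sum of order~$2$ up to $\lfloor p/4\rfloor$, and then invoke a known evaluation of the latter in terms of $E_{p-3}$.

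First, I would split the sum by the parity of $k$. Set
\[
A=\sum_{\substack{1\le k\le (p-1)/2\\ k\text{ odd}}}\frac{1}{k^2}, \qquad B=\sum_{\substack{1\le k\le (p-1)/2\\ k\text{ even}}}\frac{1}{k^2}.
\]
Then $\sum_{k=1}^{(p-1)/2}(-1)^k/k^2 = B-A$, while $A+B = H_{(p-1)/2}^{(2)}\equiv 0\pmod{p}$ by Lemma~\ref{sunzhlemma}, so
\[
\sum_{k=1}^{(p-1)/2}\frac{(-1)^k}{k^2}\equiv 2B\pmod{p}.
\]
Substituting $k=2j$ in $B$ and observing that, whether $p\equiv 1$ or $3\pmod 4$, the index $j$ ranges over $1,2,\ldots,\lfloor p/4\rfloor$, I obtain $2B=\tfrac12 H_{\lfloor p/4\rfloor}^{(2)}$, hence
\[
\sum_{k=1}^{(p-1)/2}\frac{(-1)^k}{k^2}\equiv \frac{1}{2}H_{\lfloor p/4\rfloor}^{(2)}\pmod{p}.
\]

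It therefore suffices to establish the second-order companion
\[
H_{\lfloor p/4\rfloor}^{(2)}\equiv 4(-1)^{(p-1)/2}E_{p-3}\pmod{p}
\]
of the identity for $H_{\lfloor p/4\rfloor}$ already recorded in Lemma~\ref{sunzhlemma}. This congruence is classical and appears in \cite{Sunzh1,Sunzh2}; it can be derived by the same Kummer/Euler-number technique used there, starting from a finite-sum representation of $E_{p-3}$ modulo $p$ and splitting the range of summation according to the residue of the summation variable modulo $p$.

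The only real obstacle is this last ingredient. Once it is in hand, the proof of the lemma is nothing more than a parity decomposition combined with the Wolstenholme-type vanishing $H_{(p-1)/2}^{(2)}\equiv 0\pmod p$, and the factor of $\tfrac12$ produced by the reindexing is exactly what converts the $4(-1)^{(p-1)/2}E_{p-3}$ on the right-hand side into $2(-1)^{(p-1)/2}E_{p-3}$.
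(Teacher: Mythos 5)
Your proof is correct and is essentially the paper's own argument: the paper likewise uses $H^{(2)}_{(p-1)/2}\equiv 0\pmod p$ to convert the alternating sum into $\tfrac12\sum_{k=1}^{\lfloor p/4\rfloor}k^{-2}$ and then invokes the classical congruence $\sum_{k=1}^{\lfloor p/4\rfloor}k^{-2}\equiv 4(-1)^{(p-1)/2}E_{p-3}\pmod p$ from Z.-H. Sun's work, exactly the ingredient you isolate (it is the second-order version of the $H_{\lfloor p/4\rfloor}$ statement recorded in Lemma~\ref{sunzhlemma}). The two proofs differ only in presentation, so nothing further is needed.
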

\begin{proof} By Lemma \ref{sunzhlemma},
\begin{align*}
\sum_{k=1}^{(p-1)/2}\f{(-1)^k}{k^2}\eq\sum_{k=1}^{(p-1)/2}\f{1+(-1)^k}{k^2}=\sum_{\substack{k=1\\ 2\mid k}}^{(p-1)/2}\f{2}{k^2}\f{1}{2}\sum_{k=1}^{\lfloor p/4\rfloor}\f{1}{k^2}\eq2(-1)^{(p-1)/2}E_{p-3}\pmod{p}.
\end{align*}
This proves Lemma \ref{-1k^2}.
\end{proof}

Recall that the Bernoulli numbers $B_0,B_1,\ldots$ are defined by
$$
\f{x}{e^x-1}=\sum_{n=0}^{\infty}B_n\f{x^n}{n!}\quad(0<|x|<2\pi).
$$
The Bernoulli polynomials are given by
$$
B_n(x)=\sum_{k=0}^n\binom{n}{k}B_kx^{n-k}\quad(n=0,1,2,\ldots).
$$
The congruences in the following lemma have been proved by different authors.
\begin{lemma}\label{2kk} Let $p>3$ be a prime. Then
\begin{gather*}
\sum_{k=0}^{(p-1)/2}\f{\binom{2k}{k}^2}{16^k}\eq(-1)^{(p-1)/2}+p^2E_{p-3}\pmod{p^3},\quad \t{(Sun \cite{Sun11})}\\
\sum_{k=1}^{p-1}\f{\binom{2k}{k}}{k}\eq0\pmod{p^2}\pmod{p^2},\quad \t{(Sun and Tauraso \cite{ST})}\\
\sum_{k=1}^{(p-1)/2}\f{\binom{2k}{k}}{k}\eq(-1)^{(p+1)/2}\f{8}{3}pE_{p-3}\pmod{p^2},\quad \t{(Sun \cite{Sun11})}\\
\sum_{k=1}^{p-1}\f{\binom{2k}{k}}{k^2}\eq\f{1}{2}\l(\f{p}{3}\r)B_{p-2}\l(\f13\r)\pmod{p},\quad \t{(Mattarei and Tauraso \cite{MT})}\\
\sum_{k=1}^{p-1}\f{\binom{2k}{k}}{k}H_k\eq\f{1}{3}\l(\f{p}{3}\r)B_{p-2}\l(\f13\r)\pmod{p}.\quad \t{(Mao and Sun \cite{MS})}
\end{gather*}
\end{lemma}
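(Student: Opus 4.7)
The plan is short: Lemma \ref{2kk} is a compilation lemma whose five congruences have all already been established in the literature, with the attribution printed inline next to each one. The proof therefore reduces to verifying that each displayed form matches the form appearing in its cited source, and then quoting that source. No new ideas are required, and I would not attempt to reprove any of the five statements from scratch.

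Concretely, I would handle the five parts one by one. The first congruence, for $\sum_{k=0}^{(p-1)/2}\binom{2k}{k}^2/16^k$ modulo $p^3$, is due to Sun \cite{Sun11}. The second, $\sum_{k=1}^{p-1}\binom{2k}{k}/k\equiv 0\pmod{p^2}$, is due to Sun and Tauraso \cite{ST}. The third, which evaluates $\sum_{k=1}^{(p-1)/2}\binom{2k}{k}/k$ modulo $p^2$ in terms of $pE_{p-3}$, is again from Sun \cite{Sun11}. The fourth, the mod $p$ value of $\sum_{k=1}^{p-1}\binom{2k}{k}/k^2$ in terms of $B_{p-2}(1/3)$, is Mattarei--Tauraso \cite{MT}. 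The fifth, the mod $p$ value of $\sum_{k=1}^{p-1}\binom{2k}{k}H_k/k$ in terms of the same Bernoulli-polynomial value, is Mao--Sun \cite{MS}. For each item I would locate the precise theorem or corollary number in the cited paper so that the reader can verify the statement directly.

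The only genuine obstacle is bookkeeping of conventions. Different authors normalize the Legendre symbol, the Fermat quotient $q_p(2)$, and especially the value $B_{p-2}(1/3)$ in slightly different ways; moreover, the parity factors $(-1)^{(p-1)/2}$ and $(-1)^{(p+1)/2}$ that appear in the third entry are easy to misplace by a sign. Before writing the proof I would cross-check each displayed form against the corresponding statement in its source, confirm the sign and normalization conventions, and, when a published form differs cosmetically from the one stated here, derive the stated form by elementary manipulation (for instance, substituting $2^{p-1}\equiv 1+pq_p(2)\pmod{p^2}$ or rewriting $(-1)^{(p+1)/2}=-(-1)^{(p-1)/2}$). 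Once these bookkeeping checks are done, the lemma is established by reference.
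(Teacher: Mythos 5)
Your proposal matches the paper exactly: the author gives no proof of Lemma \ref{2kk}, stating only that ``the congruences in the following lemma have been proved by different authors'' and attributing each one inline to its source, which is precisely the citation-by-citation approach you describe. Your additional care about sign and normalization conventions is sensible but goes beyond what the paper itself records.
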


\noindent{\it Proof of Theorem \ref{mainth1}}. It is easy to see that
$$
(3n-1)\f{\l(-\f12\r)_n^2\l(\f12\r)_n4^n}{n!^3}=-(6n^2-5n+1)\f{\l(-\f12\r)_n^34^n}{n!^3}=F(n,0),
$$
where $F(n,k)$ is defined in Lemma \ref{wzpair}. Thus by Lemma \ref{wzpair} we have
\begin{align}\label{key}
&\sum_{n=0}^{(p+1)/2}(3n-1)\f{\l(-\f12\r)_n^2\l(\f12\r)_n4^n}{n!^3}=-\sum_{n=0}^{(p+1)/2}F(n,0)=-F\l(\f{p+1}{2},0\r)-\sum_{n=0}^{(p-1)/2}F(n,0)\notag\\
=&-F\l(\f{p+1}{2},0\r)-\sum_{n=0}^{(p-1)/2}\sum_{k=0}^{(p-3)/2}(F(n,k)-F(n,k+1))-\sum_{n=0}^{(p-1)/2}F\l(n,\f{p-1}{2}\r)\notag\\
=&-F\l(\f{p+1}{2},0\r)-\sum_{n=0}^{(p-1)/2}\sum_{k=0}^{(p-3)/2}(G(n,k)-G(n+1,k))-\sum_{n=0}^{(p-1)/2}F\l(n,\f{p-1}{2}\r)\notag\\
=&-F\l(\f{p+1}{2},0\r)+\sum_{k=0}^{(p-3)/2}G\l(\f{p+1}{2},k\r)-\sum_{n=0}^{(p-1)/2}F\l(n,\f{p-1}{2}\r).
\end{align}

Below we first consider $$F\l(\f{p+1}{2},0\r)\pmod{p^4}$$. It is routine to check that
$$
\l(6\l(\f{p+1}{2}\r)^2-5\l(\f{p+1}{2}\r)+1\r)\bigg/(p+1)^3\eq \f{1}{2}p-\f{3}{2}p^3\pmod{p^4}.
$$
Thus we have
\begin{align*}
F\l(\f{p+1}{2},0\r)\eq -\l(2p-6p^3\r)4^{(p-1)/2}\f{\l(\f12\r)_{(p-1)/2}^3}{(1)_{(p-1)/2}^3}=-\l(2p-6p^3\r)\f{\binom{p-1}{(p-1)/2}^3}{4^{p-1}}\pmod{p^3}.
\end{align*}
Recall Morley's congruence (cf. \cite{Mor})
\begin{equation}\label{morley}
\binom{p-1}{\f{p-1}{2}}\eq(-1)^{(p-1)/2}4^{p-1}\pmod{p^3}.
\end{equation}
Thus we have
\begin{align}\label{Fp/20}
F\l(\f{p+1}{2},0\r)\eq&-\l(2p-6p^3\r)(-1)^{(p-1)/2}16^{p-1}\eq-\l(2p-6p^3\r)(-1)^{(p-1)/2}(1+pq_p(2))^4\notag\\
\eq&(-1)^{(p+1)/2}(2p-6p^3+8p^2q_p(2)+12p^3q_p^2(2))\pmod{p^4}.
\end{align}

Now we consider
$$\sum_{k=0}^{(p-3)/2}G\l(\f{p+1}{2},k\r)\pmod{p^4}.$$
By the definition of $G(n,k)$ we have
\begin{align*}
\sum_{k=0}^{(p-3)/2}G\l(\f{p+1}{2},k\r)=-p\sum_{k=0}^{(p-3)/2}\f{\l(-\f12-k\r)_{(p-1)/2}^2\l(-\f12\r)_{(p+1)/2}4^{(p+1)/2}}{(1)_{(p-1)/2}^3}.
\end{align*}
It is easy to check that
$$
\l(-\f12-k\r)_{(p-1)/2}=(2-p)\l(-\f12\r)_{(p-1)/2}\f{\l(\f12\r)_{k+1}}{\l(1-\f p2\r)_{k+1}}.
$$
Hence we obtain
\begin{equation}\label{Gp2k1}
\sum_{k=0}^{(p-3)/2}G\l(\f{p+1}{2},k\r)=-p(p-2)^2\f{\l(-\f12\r)_{(p-1)/2}^2\l(-\f12\r)_{(p+1)/2}4^{(p+1)/2}}{(1)_{(p-1)/2}^3}\sum_{k=0}^{(p-3)/2}\f{\l(\f12\r)_{k+1}^2}{\l(1-\f p2\r)_{k+1}^2}.
\end{equation}
Now by Morley's congruence \eqref{morley} again we get
\begin{align}\label{Gp2k2}
&(p-2)^2\f{\l(-\f12\r)_{(p-1)/2}^2\l(-\f12\r)_{(p+1)/2}4^{(p+1)/2}}{(1)_{(p-1)/2}^3}=-2\cdot4^{(p-1)/2}\cdot\f{\l(\f{1}{2}\r)_{(p-1)/2}^3}{(1)_{(p-1)/2}^3}=-2\f{\binom{p-1}{(p-1)/2}^3}{4^{p-1}}\notag\\
\eq&2(-1)^{(p+1)/2}16^{p-1}=2(-1)^{(p+1)/2}(1+4pq_p(2)+6p^2q_p^2(2))\pmod{p^3}.
\end{align}
On the other hand, we have
\begin{align}\label{Gp2k3}
&\sum_{k=0}^{(p-3)/2}\f{\l(\f12\r)_{k+1}^2}{\l(1-\f p2\r)_{k+1}^2}=\sum_{k=1}^{(p-1)/2}\f{\l(\f12\r)_{k}^2}{\l(1-\f p2\r)_{k}^2}\notag\\
\eq&\sum_{k=1}^{(p-1)/2}\f{\l(\f12\r)_{k}^2}{\l(1\r)_{k}^2(1-pH_k/2+p^2(H_k^2-H_k^{(2)})/8)^2}\notag\\
\eq&\sum_{k=1}^{(p-1)/2}\f{\l(\f12\r)_{k}^2}{\l(1\r)_{k}^2}\l(1+pH_k+\f{p^2}{2}H_k^2+\f{p^2}{4}H_k^{(2)}\r)\pmod{p^3}
\end{align}
by noting that
\begin{align*}
\l(1-\f{p}{2}\r)_k\eq&(1)_k\l(1-\f{p}{2}H_k+\f{p^2}{4}\sum_{1\leq i<j\leq k}\f{1}{ij}\r)\\
=&(1)_k\l(1-\f{p}{2}H_k+\f{p^2}{8}(H_k^2-H_k^{(2)})\r)\pmod{p^3}.
\end{align*}
Clearly, for $k=1,\ldots,(p-1)/2$ we have
$$
\l(\f12\r)_k^2\eq\l(\f{1-p}2\r)_k\l(\f{1+p}2\r)_k.
$$
Thus by Lemma \ref{sigmaid} we have
\begin{gather*}
\sum_{k=1}^{(p-1)/2}\f{\l(\f12\r)_{k}^2}{\l(1\r)_{k}^2}H_k\eq2(-1)^{(p-1)/2}H_{(p-1)/2}\pmod{p^2},\\
\sum_{k=1}^{(p-1)/2}\f{\l(\f12\r)_{k}^2}{\l(1\r)_{k}^2}H_k^2\eq4(-1)^{(p-1)/2}H_{(p-1)/2}^2+2(-1)^{(p-1)/2}\sum_{k=1}^{(p-1)/2}\f{(-1)^k}{k^2}\pmod{p},\\
\sum_{k=1}^{(p-1)/2}\f{\l(\f12\r)_{k}^2}{\l(1\r)_{k}^2}H_k^{(2)}\eq-2(-1)^{(p-1)/2}\sum_{k=1}^{(p-1)/2}\f{(-1)^k}{k^2}\pmod{p}.
\end{gather*}
Combining these with \eqref{Gp2k1}--\eqref{Gp2k3} and in view of Lemmas \ref{sunzhlemma}, \ref{-1k^2} and \ref{2kk} we arrive at
\begin{align}\label{Gp2k}
&\sum_{k=0}^{(p-3)/2}G\l(\f{p+1}{2},k\r)\notag\\
\eq&2p(-1)^{(p-1)/2}(1+4pq_p(2)+6p^2q_p^2(2))\notag\\
&\times((-1)^{(p-1)/2}+2p^2E_{p-3}-1-4(-1)^{(p-1)/2}pq_p(2)+10(-1)^{(p-1)/2}p^2q_p^2(2))\notag\\
\eq&2p-2(-1)^{(p-1)/2}p-8(-1)^{(p-1)/2}p^2q_p(2)-12(-1)^{(p-1)/2}p^3q_p^2(2)\notag\\
&+4(-1)^{(p-1)/2}p^3E_{p-3}\pmod{p^4}.
\end{align}

Finally, we consider
$$
\sum_{n=0}^{(p-1)/2}F\l(n,\f{p-1}{2}\r)\pmod{p^4}.
$$
By the definition of $F(n,k)$ we get that
\begin{align}\label{key1}
\sum_{n=0}^{(p-1)/2}F\l(n,\f{p-1}{2}\r)=&p+\sum_{n=1}^{(p-1)/2}(6n^2-3n-2np+p)\f{\l(-\f{p}{2}\r)_n^2\l(-\f12\r)_n4^n}{(1)_n^3}\notag\\
=&p-\f{p^2}{8}\sum_{n=1}^{(p-1)/2}(6n^2-3n-2np+p)\f{\l(1-\f{p}{2}\r)_n^2\l(\f12\r)_n4^n}{(1)_n^3\l(n-\f p2\r)^2\l(n-\f12\r)}\notag\\
\eq&p-\f{p^2}{4}\sum_{n=1}^{(p-1)/2}\f{\l(\f12\r)_n4^n}{(1)_nn^3}(1-pH_n)(n+p)(3n-p)\notag\\
\eq&p-\f{3}{4}p^2\sum_{n=1}^{(p-1)/2}\f{\l(\f12\r)_n4^n}{(1)_nn}-\f{p^3}{2}\sum_{n=1}^{(p-1)/2}\f{\l(\f12\r)_n4^n}{(1)_nn^2}\notag\\
&+\f{3}{4}p^3\sum_{n=1}^{(p-1)/2}\f{\l(\f12\r)_n4^nH_n}{(1)_nn}\pmod{p^4}.
\end{align}
Note that
$$
\f{\l(\f12\r)_n}{(1)_n}=\f{\binom{2n}{n}}{4^n}.
$$
Thus by Lemma \ref{2kk} we have
\begin{align}\label{Fnp/2}
\sum_{n=0}^{(p-1)/2}F\l(n,\f{p-1}{2}\r)\eq p+2(-1)^{(p-1)/2}p^3E_{p-3}\pmod{p^4}.
\end{align}
Substituting \eqref{Fp/20}, \eqref{Gp2k} and \eqref{Fnp/2} into \eqref{key} we arrive at
\begin{align*}
&\sum_{n=0}^{(p+1)/2}(3n-1)\f{\l(-\f12\r)_n^2\l(\f12\r)_n4^n}{n!^3}\\
\eq&(-1)^{(p-1)/2}(2p-6p^3+8p^2q_p(2)+12p^3q_p^2(2))+2p-2(-1)^{(p-1)/2}p-8(-1)^{(p-1)/2}p^2q_p(2)\\
&-12(-1)^{(p-1)/2}p^3q_p^2(2)+4(-1)^{(p-1)/2}p^3E_{p-3}-p-2(-1)^{(p-1)/2}p^3E_{p-3}\\
\eq&p-6(-1)^{(p-1)/2}p^3+2(-1)^{(p-1)/2}p^3E_{p-3}\pmod{p^4}.
\end{align*}
This coincides with \eqref{mainth1eq} since $(-1)^{(p-1)/2}=\l(\f{-1}{p}\r)$.

The proof of Theorem \ref{mainth1} is now complete.\qed

\medskip

\noindent{\it Proof of Theorem \ref{mainth2}}. As in the proof of Theorem \ref{mainth1}, by Lemma \ref{wzpair}, we obtain
\begin{align}\label{key2}
\sum_{n=0}^{p-1}(3n-1)\f{\l(-\f12\r)_n^2\l(\f12\r)_n4^n}{n!^3}=-\sum_{n=0}^{p-1}F(n,0)=\sum_{k=0}^{(p-3)/2}G(p,k)-\sum_{n=0}^{p-1}F\l(n,\f{p-1}{2}\r).
\end{align}
Via a similar discussion as in the computation of \eqref{key1} we arrive at
\begin{align*}
\sum_{n=0}^{p-1}F\l(n,\f{p-1}{2}\r)\eq&p-\f{3}{4}p^2\sum_{n=1}^{p-1}\f{\l(\f12\r)_n4^n}{(1)_nn}-\f{p^3}{2}\sum_{n=1}^{p-1}\f{\l(\f12\r)_n4^n}{(1)_nn^2}\notag\\
&+\f{3}{4}p^3\sum_{n=1}^{p-1}\f{\l(\f12\r)_n4^nH_n}{(1)_nn}\pmod{p^4}.
\end{align*}
Then by Lemma \ref{2kk} we immediately get that
\begin{equation}\label{sum2}
\sum_{n=0}^{p-1}F\l(n,\f{p-1}{2}\r)\eq p\pmod{p^4}.
\end{equation}

Below we evaluate $\sum_{k=0}^{(p-3)/2}G(p,k)$ modulo $p^4$. Note that
$$
\l(-\f12-k\r)_{p-1}=\f{\l(-\f12\r)_{p-1}\l(\f32\r)_k}{\l(\f52-p\r)_k}
$$
and $p\nmid(3/2-p)_k$ for $k=1,\ldots,(p-5)/2$. It is easy to see that
\begin{align}\label{Gpk}
\sum_{k=0}^{(p-3)/2}G(p,k)=&\sum_{k=0}^{(p-3)/2}(-2p+1)\l(p-\f32\r)\f{\l(-\f12-k\r)_{p-1}^2\l(-\f12\r)_{p-1}4^p}{(1)_{p-1}^3}\notag\\
=&\sum_{k=0}^{(p-5)/2}\f{-2p+1}{\l(p-\f32\r)^2}\f{\l(-\f12\r)_{p}^3\l(\f32\r)_k^24^p}{(1)_{p-1}^3\l(\f52-p\r)_k^2}+G\l(p,\f{p-3}{2}\r).
\end{align}
In light of Lemma \ref{sunzhlemma}, we obtain
\begin{align}\label{-12p}
\l(-\f12\r)_{p}=&-\f12\cdot\f12\cdots\l(p-\f32\r)=-\f{(2p-1)!}{2^{2p-1}(2p-1)(p-1)!}\notag\\
=&-p\f{(1+p)_{p-1}}{2^{2p-1}(2p-1)}\eq-p\f{(1)_{p-1}\l(1+pH_{p-1}+\f{p^2}{4}(H_{p-1}^2-H_{p-1}^{(2)}\r)}{2^{2p-1}(2p-1)}\notag\\
\eq&\f{-p(1)_{p-1}}{2^{2p-1}(2p-1)}\pmod{p^4}.
\end{align}
Therefore by Fermat's little theorem we have
\begin{align}\label{Gpk1}
&\sum_{k=0}^{(p-5)/2}\f{-2p+1}{\l(p-\f32\r)^2}\f{\l(-\f12\r)_{p}^3\l(\f32\r)_k^24^p}{(1)_{p-1}^3\l(\f52-p\r)_k^2}\notag\\
\eq&2p^3\sum_{k=0}^{(p-5)/2}\f{1}{(2k+3)^2}=2p^3\l(\sum_{k=0}^{(p-3)/2}\f{1}{(2k+1)^2}-1\r)=2p^3\l(H_{p-1}^{(2)}-\f14H_{(p-1)/2}^{(2)}-1\r)\notag\\
\eq&-2p^3\pmod{p^4}.
\end{align}
On the other hand, with the help of \eqref{-12p}, we arrive at
\begin{align}\label{Gpk2}
G\l(p,\f{p-3}{2}\r)=&(-2p+1)\f{\l(1-\f p2\r)_{p-1}^2\l(-\f12\r)_p4^p}{(1)_{p-1}^3}\notag\\
\eq&-p(-2p+1)\f{(1)_{p-1}^34^p}{(1)_{p-1}^32^{2p-1}(2p-1)}=2p\pmod{p^4}.
\end{align}
Substituting \eqref{Gpk1} and \eqref{Gpk2} into \eqref{Gpk} we have
\begin{equation}\label{sum1}
\sum_{k=0}^{(p-3)/2}G(p,k)\eq2p-2p^3\pmod{p^4}.
\end{equation}
Combining \eqref{sum1} and \eqref{sum2} we finally obtain
$$
\sum_{n=0}^{p-1}(3n-1)\f{\l(-\f12\r)_n^2\l(\f12\r)_n4^n}{n!^3}\eq p-2p^3\pmod{p^4}.
$$
Now the proof of Theorem \ref{mainth2} is complete.\qed

\end{document}